\newcommand{\e} {\varepsilon}
\newcommand{\p} {\textnormal{\textsf{P}}}
\newcommand{\E} { \textnormal{\textsf{E}}}
\newcommand{\un}{\mathds{1}}
\newcommand{\N} { \mathbb{N} }
\newcommand{\Z} { \mathbb{Z} }
\newcommand{\R} { \mathbb{R} }
\renewcommand{\P} { \mathbb{P} }
\renewcommand{\k}{\kappa}
\renewcommand{\o}{\omega}
\newcommand{\po}{P_\omega}
\newcommand{\eo}{E_\o}
\begin{document}

%%%%%%%%%%%%%%%%%%%%%%%%%%%%%%%%%%%%%%%%%%%%%%%%%%%%%%%%%%%%%%%%%%%
%%                                                               %%
%% No need for \maketitle.                                       %%
%%                                                               %%
%%%%%%%%%%%%%%%%%%%%%%%%%%%%%%%%%%%%%%%%%%%%%%%%%%%%%%%%%%%%%%%%%%%

%%%%%%%%%%%%%%%%%%%%%%%%%%%%%%%%%%%%%%%%%%%%%%%%%%%%%%%%%%%%%%%%%%%
%%                                                               %%
%% Please replace what follows by the body of your article       %%
%% (up to the bibliography):                                     %%
%%                                                               %%
%%%%%%%%%%%%%%%%%%%%%%%%%%%%%%%%%%%%%%%%%%%%%%%%%%%%%%%%%%%%%%%%%%%

\section{Introduction and statement of the main results}

\subsection{About collisions of random walks}
We consider several random walks.
%We are interested in collisions, or meetings of these random walks.
%In particular,
We wonder if there exist almost surely infinitely many times
at which all these random walks are simultaneously at the same location;
such meetings are
called collisions.
%(or meetings),

This question was first considered for two (symmetric) simple random walks on $\Z^d$  by
P{\'o}lya \cite{Polya}. In fact (see P{\'o}lya \cite{PolyaInc}, "two incidents"),
the main motivation for  P{\'o}lya \cite{Polya} to prove his celebrated result about recurrence and transience
of simple random walks in $\Z^d$ was  to deduce that, almost surely,
two independent simple random walks in $\Z^d$ meet (or collide) infinitely often if $d\leq 2$,
while they meet only a finite number of times if $d\geq 3$.

Later, Dvoretzky and Erd{\"o}s \cite{Dvoretzky_Erdos} proved that almost surely,
three independent simple random walks in $\Z$
collide infinitely often, while four of them  collide only a finite number of times.

The question whether two or three random walks collide infinitely often has attracted
renewed attention recently.
See for example Krishnapur and Peres \cite{KrishnapurPeres},
Chen \cite{Chen16},
Chen, Wei, Zhang \cite{Chen_Wei_Zhang_08},
Chen and Chen \cite{Chen_Chen_10}, \cite{Chen_Chen_11},
Croydon and De Ambroggio \cite{Croydon_Triple},
Barlow, Peres and Sousi \cite{BarlowPeresSousi},
Gurel-Gurevich and Nachmias \cite{GG_M},
Hutchcroft and Peres \cite{HP15},
Watanabe \cite{W23}
and Roy, Takei and Tanemura \cite{Roy_Elephant_24}
for collisions of random walks on wedge combs, percolation clusters of $\Z^2$, some trees
including the three-dimensional uniform spanning tree, and
 %for collisions of two
for elephant random walks.
See e.g. the introduction of
%Croydon and De Ambroggio
\cite{Croydon_Triple} for a more detailed recent review
of collisions of random walks on graphs.

Collisions have also been investigated recently for random walks in random environments on $\Z$.
In particular,
Gallesco \cite{G13} considered meetings of two Sinai walks, whereas
Gantert, Kochler and P\`ene \cite{NMFa} studied collisions only at the origin of $d$ independent
recurrent random walks in the same environment.
This was generalized by Devulder, Gantert and P\`ene \cite{Devulder_Gantert_Pene}
who provided a necessary and sufficient condition for several independent recurrent simple random walks
and recurrent random walks in random environments (Sinai walks) to meet simultaneously almost surely a finite (or infinite) number of times.
See also Kim and Kusuoka \cite{Kim_Kusuoka}
for related questions for Brownian motions and Brox diffusions
(we recall that the Brox diffusion is generally considered as a continuous time and space analogue of Sinai  walks),
and Halberstam and Hutchcroft \cite{HH22}
for collisions of random walks on some random conductance models.
Finally, Devulder, Gantert and P\`ene \cite{DGP_Collision_Transient} proved that $d$ independent random walks in the same environment in $\Z$
meet infinitely often, whatever $d$ is, in the transient subballistic case (when $0<\k<1$,  see below for a definition of $\k$).
The aim of the present paper is to extend this result when $0<\k<1/2$,
by adding a recurrent simple random walk.

We also mention that some applications of collisions of random walks in physics and in biology
are given in Campari and Cassi \cite{CampariCassi},
including some applications which can involve collisions of different kinds of random walks,
for example for prey-predator interactions, foraging, chemical reaction kinetics or pharmacokinetics.

\subsection{Model and results}
\label{Sub_Sec_12}
We consider a collection $\o:=(\o_x)_{x\in\Z}$  of i.i.d random variables,
taking values in $(0,1)$ and with joint law $\p$.
A realization of $\o$ is called an {\it environment}.
Let $d\geq 1$, and $(x_1,\dots, x_d, z_1)\in\Z^{d+1}$.
Conditionally on $\o$, we consider $(d+1)$ independent Markov chains
$\big(S_n^{(j)}\big)_{n\in\N}$, $1\leq j \leq d$ and $(Z_n)_{n\in\N}$,
defined  by $S_0^{(j)}=x_j$ for $1\leq j \leq d$,
$Z_0=z_1$
$\po^{(x_1,\dots, x_d, z_1)}$-almost surely
and for every $n\in\N$, $y\in\Z$ and $z\in\Z$,
\begin{equation}\label{probatransition}
    \po^{(x_1,\dots, x_d, z_1)}\big(S_{n+1}^{(j)}=z|S_n^{(j)}=y\big)
=
\left\{
\begin{array}{ll}
\o_y & \text{ if } z=y+1,\\
1-\o_y & \text{ if } z=y-1,\\
0      & \text{ otherwise},
\end{array}
\right.
\quad
    j\in\{1,\dots,d\},
\end{equation}
and
\begin{equation}
\label{probatransition2}
    \po^{(x_1,\dots, x_d, z_1)}\big(Z_{n+1}=z|Z_n=y\big)
=
\left\{
\begin{array}{ll}
1/2 & \text{ if } z=y+1 \text{ or } z=y-1,\\
0      & \text{ otherwise}.
\end{array}
\right.
\end{equation}
Each $\big(S_n^{(j)}\big)_{n\in\N}$ is called a {\it random walk in random environment} (RWRE),
whereas $Z:=(Z_n)_{n\in\N}$ is a (recurrent) simple random walk.
Thus, under $\po^{(x_1,\dots, x_d, z_1)}$,
$S^{(j)}:=\big(S_n^{(j)}\big)_{n\in\N}$, $1\leq j \leq d$, are $d$ independent random walks in the same (random) environment $\o$,
and are independent of the simple random walk $(Z_n)_{n\in\N}$.
Here and
%throughout the paper
in the sequel, $\N:=\{0,1,2,\dots\}$ denotes the set of nonnegative integers.

The probability $\po^{(x_1,\dots, x_d, z_1)}$ is called the {\it quenched law}.
We also define the {\it annealed law} as follows:
$$
    \P^{(x_1,\dots,x_d,z_1)}(\cdot):=\int\po^{(x_1,\dots,x_d,z_1)}(\cdot)\p(\textnormal{d}\o).
$$
Notice in particular that for $1\leq j \leq d$,  $\big(S_n^{(j)}\big)_{n\in\N}$ is not Markovian under $\P^{(x_1,\dots,x_d,z_1)}$.

We denote by $\E$ and $\eo^{(x_1,\dots,x_d, z_1)}$
%$\EE$, $\eo$ and $\eo^x$
the expectations under
$\p$ and $\po^{(x_1,\dots,x_d,z_1)}$
%$\P$, $\po$ and $\po^x$
respectively. We also define
$$
    \rho_x
:=
    \frac{1-\o_x}{\o_x},
\qquad
    x\in\Z.
$$
We assume that there exists
an $\varepsilon_0\in(0,1/2)$ such that
\begin{equation}\label{UE}
    \p\big[\varepsilon_0\leq \omega_0 \leq 1-\varepsilon_0\big]
=
    1.
\end{equation}
Hypothesis \eqref{UE} is standard for RWRE
and is called the {\it ellipticity condition}.
Solomon \cite{S75} proved that each $S^{(j)}$ is almost surely recurrent
when $\E(\log \rho_0)=0$, and  transient to the right
when $\E(\log \rho_0)<0$.
Here and throughout the paper, $\log$ denotes the natural logarithm.
In this paper we only consider transient and subballistic RWRE.
More precisely, we assume that
\begin{equation}\label{Condition_Neg}
\E(\log \rho_0)<0
\end{equation}
and that
\begin{align}
& \label{ConditionA}
    \text{ there exists $\k>0$ such that }
    \E\big(\rho_0^\k\big)=1.
%&   \label{ConditionB} \text{ the distribution of $\log \rho_0$ is non-lattice.}
\end{align}
It is well known that such a $\kappa$ is unique.
Also, Hypotheses \eqref{Condition_Neg} and \eqref{ConditionA}
exclude the case when the $S^{(j)}$ are symmetric or non symmetric simple random walks (for which $\o_x$ does not depend on $x$).
Under our assumptions \eqref{UE}, \eqref{Condition_Neg} and \eqref{ConditionA}, since $\E(\log \rho_0)<0$,  each $\big(S_n^{(j)}\big)_{n\in\N}$ is a.s.
transient to the right. If moreover $0<\k<1$,
it has been proved by Kesten, Koslov and Spitzer  \cite{KestenKozlovSpitzer}
that, under the additional hypothesis that $\log\rho_0$ has a non-arithmetic distribution,
$S_n^{(j)}$ is asymptotically of order $n^{\k}$ as $n\to+\infty$ for each $1\leq j \leq d$
(we do not make this non-arithmetic assumption in the present paper).
We refer to Zeitouni \cite{Z01}, R{\'e}v{\'e}sz \cite{Revesz}
 and Shi \cite{S2}
for an overview of results proved for RWRE before 2005.
See e.g. \cite{Devulder_LLT_ArXiv} for more recent references.

\medskip

Our main result is the following.

\medskip

\begin{theorem}
%[My theorem]
\label{Theorem1Meeting_2}
Assume \eqref{UE}, \eqref{Condition_Neg} and \eqref{ConditionA}, with $0<\k<1/2$.
Let $x_1,\dots,x_d, z_1\in \mathbb Z$, with the same parity,
i.e.
$(x_1,\dots,x_d,z_1)\in[(2\Z)^{d+1}\cup (2\Z+1)^{d+1}]$,
where $d\geq 1$.
Then $\P^{(x_1,\dots,x_d,z_1)}$-almost surely, there exist infinitely many $n\in\N$ such that
$$
    S_n^{(1)}= S_n^{(2)}=\ldots=S_n^{(d)}=Z_n.
$$
\end{theorem}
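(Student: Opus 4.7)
The plan is to show, at a sequence of growing time scales $T_n$, that with probability bounded below there exists a ``deep trap'' of the environment at a location $x_n$ of order $T_n^\k$ in which the $d$ walks $S^{(1)},\dots,S^{(d)}$ all sit for a time window of length $\asymp T_n$, and that during this window the simple random walk $Z$ visits $x_n$ at some moment where all the $S^{(j)}$ are simultaneously at $x_n$. The reason the hypothesis $0<\k<1/2$ enters is geometric: at time $T_n$ the SRW $Z$ is typically at distance of order $\sqrt{T_n}$ from the origin, while the trap sits at distance $T_n^\k=o(\sqrt{T_n})$, so by the local CLT the number of visits of $Z$ to $x_n$ over $[T_n,CT_n]$ is of order $\sqrt{T_n}\to\infty$ with positive probability. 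If instead $\k\ge 1/2$, the trap would typically lie further out than $Z$ can reach in time $\asymp T_n$, and the argument would break.

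For the first step I would import from \cite{DGP_Collision_Transient} the construction of good traps for the $d$ RWRE. Using the potential $V(x)=\sum_{y=1}^x\log\rho_y$, one locates at each scale $T_n$ an interval $[A_n,B_n]\subset\N$ with an interior minimum $x_n\asymp T_n^\k$ and flanking barriers of height $H_n$ satisfying $e^{H_n}\asymp T_n$ (the standard tail behaviour underlying the Kesten--Kozlov--Spitzer stable limit). Inside such a trap the quenched chain $S^{(j)}$ mixes on a time scale $o(T_n)$ to its reversible measure, which puts mass bounded below at $x_n$. Since the $d$ walks are conditionally independent given $\o$, on the favorable trapping event the fraction of times $t\in[T_n,CT_n]$ at which $S^{(1)}_t=\cdots=S^{(d)}_t=x_n$ is bounded below by a deterministic constant $c_d>0$ with high quenched probability.

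The second step uses that $Z$ is independent of $\o$ and of the $S^{(j)}$: conditionally on everything else, $|Z_{T_n}-x_n|\leq C\sqrt{T_n}$ with positive probability, and given this the local time of $Z$ at $x_n$ over $[T_n,CT_n]$ is of order $\sqrt{T_n}$ by the local CLT. Setting
\[
N_n\ :=\sum_{t=T_n}^{CT_n}\un\bigl\{Z_t=S^{(1)}_t=\cdots=S^{(d)}_t=x_n\bigr\},
\]
a first-moment estimate restricted to the favorable environment and trapping event yields a conditional expectation of $N_n$ of order at least $c\sqrt{T_n}\to\infty$; a matching second-moment upper bound (controlling both the return structure of $Z$ to $x_n$ and that of each $S^{(j)}$ to $x_n$ inside the trap) then gives $\P^{(x_1,\dots,x_d,z_1)}(N_n\geq 1)\geq c'>0$. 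Choosing $T_n$ to grow geometrically so that the good traps live in disjoint portions of $\o$ and the walks effectively reset between scales, a conditional Borel--Cantelli argument given $\o$ upgrades this uniform lower bound into $\{N_n\geq 1\}$ occurring for infinitely many $n$, which is exactly the content of the theorem. The common-parity hypothesis on $(x_1,\dots,x_d,z_1)$ is preserved at every time step by all $(d+1)$ walks and introduces no additional difficulty.

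The main obstacle I anticipate is coordinating the two distinct conditional-independence structures at play: the $d$ RWRE are independent given $\o$ but strongly correlated through $\o$, while $Z$ is independent of $\o$ altogether. Making the first- and especially the second-moment bound on $N_n$ work requires simultaneously handling SRW local-time correlations at $x_n$ and quenched return correlations of each $S^{(j)}$ to $x_n$ inside the trap. A closely related delicate point is to extract from \cite{DGP_Collision_Transient} a quantitative trap statement \emph{sufficiently localized in $\o$} so that the events at distinct scales $T_n$ can be decoupled in order to close the Borel--Cantelli argument at the end.
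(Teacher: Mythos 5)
Your single-scale construction is essentially the one the paper uses: deep traps whose bottoms $b_{i(n)}$ lie at distance at most $N_{i(n)}^{\k+\delta}=o\big(\sqrt{N_{i(n)}}\big)$ from the origin (this is exactly where $0<\k<1/2$ enters, via \eqref{eq_finale}), a uniform quenched lower bound $C_8$ on $P_\omega^{x}[S_k=b_{i(n)}]$ over a time window of length $\asymp N_{i(n)}$ imported from \cite{DGP_Collision_Transient}, and the independence of $Z$ from $\o$ and from the $S^{(j)}$. One remark on efficiency: the paper does not need your first/second-moment computation on the collision count $N_n$. It instead conditions on the first visit $U_n$ of $Z$ to $b_{i(n)}$ after time $N_{i(n)}$, shows $\po(U_n\le 2N_{i(n)})\ge C_1$ by Donsker plus the nearest-neighbour intermediate-value property, and then pays only the factor $(C_8/4)^d$ for the $d$ RWRE to be at $b_{i(n)}$ at the stopping time $U_n$. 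This sidesteps entirely the ``delicate point'' you flag about coordinating SRW local-time correlations with quenched return correlations of the $S^{(j)}$; you should expect your second-moment bound to be doable but strictly harder than necessary.

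The genuine gap is in your final step. The conditional Borel--Cantelli argument you propose requires either independence of the events $\{N_n\ge 1\}$ across scales or a lower bound on $\po^{(\cdot)}(N_n\ge 1)$ that is uniform over the (random) positions of all $d+1$ walks at the start of the $n$-th window. Neither is available: the walks do not reset between scales, and the key quenched hitting estimate \eqref{eq_Cor_42} (that the RWRE reaches $b_{i(n)}$ before time $N_{i(n)}/10$) is an asymptotic statement for each \emph{fixed} starting point $x$, not uniform in $x$; likewise $Z$ may sit at distance $\sqrt{2T_{n-1}\log\log T_{n-1}}$ from the trap when the next window opens. Choosing $T_n$ geometric does not decouple anything here, and you give no mechanism that would. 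The paper avoids this entirely: it observes that $E=\limsup_n A_n$ is a shift-invariant event for the quenched Markov chain $\big(S_n^{(1)},\dots,S_n^{(d)},Z_n\big)_n$, so that by Doob's theorem $P_\omega^{(S_n^{(1)},\dots,S_n^{(d)},Z_n)}(E)\to \mathbf 1_E$ almost surely, while the single-scale bound gives $P_\omega^{(x)}(E)\ge C_3$ for every starting point $x$ in the parity class $\mathcal D$ simultaneously (for $\p$-a.e.\ $\o$), because the $\liminf$ over scales is already built into $E$. Evaluating along the trajectory and passing to the limit forces $\mathbf 1_E\ge C_3$, hence $\mathbf 1_E=1$. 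You need this (or an equivalent zero--one device) to close the proof; as written, your Borel--Cantelli step does not go through.
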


To the best of our knowledge, this is the first example where there are infinitely many collisions
between a recurrent random walk, $(Z_n)_{n\in\N}$, and (arbitrary many) transient ones, the $\big(S_n^{(j)}\big)_{n\in\N}$.
Indeed, this result is valid for each value of $d\geq 1$.
%so there are almost surely infinitely many collisions
%between a recurrent random walk and arbitrary many transient random walks.

In several other papers, see e.g. \cite{HP15} and the introduction of \cite{HH22},
one goal was to make, under appropriate hypotheses,
a link between the recurrence of a random walk
and the property that some independent copies of this random walk meet (i.e. collide)
almost surely infinitely often.
This follows from P{\'o}lya \cite{Polya}'s observation that
one simple random walk in $\Z^d$ is recurrent if and only if two independent simple random walks in $\Z^d$ meet infinitely often.
On the contrary, our Theorem \ref{Theorem1Meeting_2} gives an example of infinite collision property for a mixture of recurrent and transient random walks.

Our Theorem \ref{Theorem1Meeting_2} extends, only in the case $0<\kappa<1/2$,
the main result of Devulder et al.
%(\cite{DGP_Collision_Transient}, Theorem 1.1),
\cite[Theorem 1.1]{DGP_Collision_Transient},
by adding the (recurrent) simple random walk $(Z_n)_{n\in\N}$.
It also continues the study started in \cite{Devulder_Gantert_Pene} of collisions between simple random walks and RWRE.

%Also,
%It is also a continuation of Devulder et al. \cite{Devulder_Gantert_Pene}, which considers collisions of
%simple random walks and recurrent RWRE.

However, the infinite collision property stated in Theorem \ref{Theorem1Meeting_2}
does not hold when $\k>1/2$. Indeed, we prove:

\begin{proposition}
%[My proposition]
\label{Prop_kappa_grand}
Assume \eqref{UE}, \eqref{Condition_Neg} and \eqref{ConditionA}, with $\k>1/2$. Let $x_1,\dots,x_d, z_1\in \mathbb Z$,
%with the same parity,
where $d\geq 1$.
Then $\P^{(x_1,\dots,x_d,z_1)}$-almost surely, the number of $n\in\N$ such that
$$
    S_n^{(1)}= S_n^{(2)}=\ldots=S_n^{(d)}=Z_n
$$
is finite.
\end{proposition}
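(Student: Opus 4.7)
The plan is to prove the stronger statement that $\P^{(x_1,\dots,x_d,z_1)}$-a.s.\ only finitely many $n$ satisfy $S^{(1)}_n=Z_n$; this suffices since the simultaneous equality forces in particular $S^{(1)}_n=Z_n$. The heuristic is clear: under \eqref{ConditionA} with $\k>1/2$, the typical position $n^{\k}$ of the transient RWRE $S^{(1)}$ (or $vn$ if $\k\ge 1$) dominates the law-of-iterated-logarithm envelope $\sqrt{2n\log\log n}$ of the simple random walk $Z_n$. My task is to upgrade this in-probability statement to an almost-sure one.

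If $\k\ge 1$, the law of large numbers of Solomon~\cite{S75} yields $S^{(1)}_n/n\to v$ a.s.\ with $v>0$, and combined with the LIL for $Z$ this gives $S^{(1)}_n>|Z_n|$ eventually a.s. For $1/2<\k<1$ I would perform a Borel--Cantelli argument along the geometric subsequence $n_k:=2^k$. Put $M_k:=\lceil 2\sqrt{n_{k+1}\log\log n_{k+1}}\rceil$ and
\begin{equation*}
    \rho_M:=\sup\bigl\{n\in\N:S^{(1)}_n\le M\bigr\}\in[0,\infty),
\end{equation*}
which is a.s.\ finite by transience of $S^{(1)}$ to $+\infty$. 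The pivotal ingredient is the uniform tail bound
\begin{equation*}
    \P\bigl(\rho_M\ge n\bigr)\le CM/n^{\k},
\end{equation*}
which applied with $(M,n)=(M_k,n_k)$ yields $\P(\rho_{M_k}\ge n_k)=O(2^{k(1/2-\k)}\sqrt{\log k})$, summable since $\k>1/2$. Borel--Cantelli then gives $\rho_{M_k}<n_k$ for every $k\ge k_0(\o,S^{(1)})$, and hence $S^{(1)}_n>M_k$ for all $n\in[n_k,n_{k+1}]$. Combined with the maximal LIL bound $\max_{m\le n_{k+1}}|Z_m|\le M_k$ valid for $k$ large, I conclude that $S^{(1)}_n>|Z_n|$ beyond some random threshold, so only finitely many collisions can occur.

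The hardest point will be justifying the tail estimate on $\rho_M$. My plan is to derive it from the regeneration structure of a transient RWRE: $\rho_M$ is bounded above by the first regeneration time strictly after the hitting time of $M+1$, which admits a renewal decomposition as a sum of $O(M)$ i.i.d.\ positive increments $\chi_i$ whose tails inherit the exponent $\k$ from \eqref{ConditionA}, namely $\P(\chi_1>t)\le Ct^{-\k}$ (this is the building block of the stable limit theorem of Kesten, Kozlov and Spitzer~\cite{KestenKozlovSpitzer}). I would then split the sum according to whether each summand exceeds $n/2$, using $\E[\chi_1\un_{\{\chi_1\le s\}}]\le Cs^{1-\k}$ valid for $\k<1$, to obtain $\P(\sum_{i\le M}\chi_i>n)\le CM/n^{\k}$ uniformly in $M$ and $n$, as required.
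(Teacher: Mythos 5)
Your overall skeleton matches the paper's: reduce to showing $S^{(1)}_n\neq Z_n$ for all large $n$, control $|Z_n|$ by the law of the iterated logarithm, and show that $S^{(1)}_n$ eventually exceeds the LIL envelope, splitting into a ballistic case (Solomon's law of large numbers) and a zero-speed case. The genuine difference is in the zero-speed case: the paper invokes the quenched slowdown estimate of Fribergh, Gantert and Popov \cite[eq.~(1.7)]{FGP10}, which gives $\sum_n P_\omega^x\big(S^{(1)}_n<n^{1/2+\e}\big)<\infty$ and hence $S^{(1)}_n\ge n^{1/2+\e}$ eventually by a quenched Borel--Cantelli argument, whereas you build an annealed tail bound $\P(\rho_M\ge n)\le CMn^{-\kappa}$ for the last passage time below level $M$ out of the regeneration structure and then apply Borel--Cantelli along dyadic blocks. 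Your route is sound: the number of regeneration blocks needed to clear level $M$ is deterministically $O(M)$ because regeneration levels are strictly increasing integers, the inter-regeneration times have the tail exponent $\kappa$ (this is standard, in the spirit of \cite{KestenKozlovSpitzer}), and your truncation argument correctly yields $\P\big(\sum_{i\le CM}\chi_i>n\big)\le C'Mn^{-\kappa}$, after which the exponents $2^{k(1/2-\kappa)}\sqrt{\log k}$ are summable precisely because $\kappa>1/2$. The trade-off is that the paper's argument is two lines given the external quenched input, while yours is self-contained at the price of setting up the renewal decomposition; also beware that your symbol $\rho_M$ collides with the paper's $\rho_x=(1-\o_x)/\o_x$.

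There is one concrete error: you place $\kappa=1$ in the ballistic case, but when $\kappa=1$ condition \eqref{ConditionA} forces $\E(\rho_0)=1$, so Solomon's criterion gives speed $v=0$ and the claim $S^{(1)}_n/n\to v>0$ fails there. This is exactly why the paper splits the cases as $\kappa>1$ versus $1/2<\kappa\le 1$. The repair is to fold $\kappa=1$ into your dyadic argument, replacing the truncated moment bound $\E[\chi_1\un_{\{\chi_1\le s\}}]\le Cs^{1-\kappa}$ by the logarithmic bound $\E[\chi_1\un_{\{\chi_1\le s\}}]\le C\log s$, which still yields a summable series $\sum_k M_k(\log n_k)/n_k<\infty$; as written, however, your proof does not cover $\kappa=1$.
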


Heuristically,
the reason why
%we restrict ourselves to the case $0<\kappa<1/2$
these random walks almost surely meet only a finite number of times
when $1/2<\kappa<1$
is that by \cite{KestenKozlovSpitzer},
with the additional assumption that $\log\rho_0$ has a non-arithmetic distribution,
each $S_n^{(j)}$ is asymptotically of order $n^{\k}$ as $n\to+\infty$ (see also Fribergh et al. \cite{FGP10} after their Theorem 1.1),
which is much larger than
$\sqrt{2 n \log\log n}$
%$\sqrt{n}$
and then much larger than $Z_n$
by the law of iterated logarithm for simple random walks.
%(recall also that $Z_n$ is of order $\sqrt{n}$ as $n\to+\infty$ by the central limit theorem).
Then we will prove that almost surely, $S_n^{(1)}=Z_n$ only a finite number of times.
This will also be true when $\kappa\geq 1$, since in this case $S_n^{(1)}$ is even larger.
%by \cite{KestenKozlovSpitzer} when $\log\rho_0$ has a non-arithmetic distribution.

On the contrary, when $0<\k<1/2$, for each $1\leq j\leq d$,
$S_n^{(j)}$ is negligible compared to $\sqrt{n}$ for large $n$,
which explains intuitively why $S^{(j)}$ can collide infinitely often with $Z$
even though one of them is transient while the other one is recurrent.
However, proving that the $d+1$ random walks in Theorem \ref{Theorem1Meeting_2} almost surely meet simultaneously infinitely often requires some precise estimates on the environment and on the random walks.
Our proof makes use of some intermediate results of \cite{DGP_Collision_Transient},
which rely on  quenched techniques for transient RWRE.
For more information about such techniques, we refer to Enriquez et al. \cite{ESZ2} and \cite{ESZ3},
Peterson et al. \cite{Peterson_Zeitouni}
and Dolgopyat et al. \cite{Dolgopyat_Goldsheid}.
See also Andreoletti et al. \cite{AndreolettiDevulder} and \cite{AndreolettiDevulderVechambre}
for quenched techniques for the continuous time and space analogue of transient RWRE with the point of view of $h$-extrema.

The recurrent case, that is, when $\E(\log \rho_0)=0$, has already been treated in \cite{Devulder_Gantert_Pene}.
Also, by symmetry, our Theorem \ref{Theorem1Meeting_2} and Proposition \ref{Prop_kappa_grand} remain valid when
our hypothesis $\E(\log \rho_0)<0$
is replaced by $\E(\log \rho_0)>0$, with the conditions $\k>0$, $0<\k<1/2$ and $\k>1/2$ replaced respectively by
$\k<0$, $-1/2<\k<0$ and $\k<-1/2$
in \eqref{ConditionA}, Theorem \ref{Theorem1Meeting_2} and Proposition \ref{Prop_kappa_grand}.

The proofs are provided in Section \ref{Sect_Proofs}.
Some extensions and open questions are given in Section \ref{Sect_Extensions_Questions}.

\section{Proofs}\label{Sect_Proofs}
Before starting the proofs, we recall that the {\it potential} $(V(x),\ x\in\Z)$ is a function of the environment $\o$, which is defined as follows:
\begin{equation*}
%\label{eq_def_V}
    V(x)
:=\left\{
    \begin{array}{lr}
    \sum_{k=1}^x \log\frac{1-\o_k}{\o_k}
    &
    \textnormal{if } x>0,\\
    0 & \textnormal{if }x=0,\\
    -\sum_{k=x+1}^0 \log\frac{1-\o_k}{\o_k} &  \textnormal{if } x<0.
    \end{array}
    \right.
\end{equation*}

\subsection{Proof of Theorem \ref{Theorem1Meeting_2}}
We suppose that \eqref{UE}, \eqref{Condition_Neg} and \eqref{ConditionA} hold, with $0<\k<1/2$.
Let $d\geq 1$.
We first assume that
$x_1\in(2\Z), \dots, x_d\in(2\Z)$ and $z_1\in(2\Z)$.

%We will use the following results of \cite{DGP_Collision_Transient}.
We now introduce some random variables defined in \cite{DGP_Collision_Transient};
their precise definitions are not needed here, we only need
to know that they exist and that they satisfy some properties, which are proved in \cite{DGP_Collision_Transient}
and that we recall.

As in
%(\cite{DGP_Collision_Transient} start of Section 3),
\cite[start of Section 3]{DGP_Collision_Transient},
we fix some
$\varepsilon\in(0,\frac{1-\kappa}{2\kappa})$,
which is possible since $0<\k<1/2$.
For every $x\in\R$, we denote by $\lfloor x\rfloor$ the integer part of $x$.
We define, for $i\geq 1$, as in
%(\cite{DGP_Collision_Transient} eq. (3.1) and (3.2)),
\cite[eq. (3.1) and (3.2)]{DGP_Collision_Transient},
\begin{equation}\label{eq_def_N_i}
    N_i
:=
    \big\lfloor C_0 i^{1+\varepsilon}(i!)^{\frac {1+\varepsilon}\kappa}\big\rfloor
\end{equation}
for some $C_0>1$ (the precise value of $C_0$ is not needed here; it is fixed  in
%(\cite{DGP_Collision_Transient}, Fact 2.4),
\cite[Fact 2.4]{DGP_Collision_Transient},
which extends Enriquez et al. \cite{ESZ2}, Lemma 4.9)
and
\begin{equation*}
%\label{eq_def_f_i}
    f_i
:=
    \log (N_i/C_0)-(1+\varepsilon)\log i
\sim_{i\to+\infty}
    \frac{1+\varepsilon}\kappa\log (i!)
\sim_{i\to+\infty}
    \frac{1+\varepsilon}\kappa i \log i\,  .
\end{equation*}
We also define by induction, as
(\cite{DGP_Collision_Transient} eq. (2.10))
and as in  \cite{ESZ2},
the weak descending ladder epochs for the potential $V$ as
\begin{equation}\label{eq_def_ei}
    e_0
:=
    0,
\qquad
    e_{i}
:=
    \inf\{k>e_{i-1},\ \ V(k)\leq V(e_{i-1})\},
\qquad
    i\geq 1.
\end{equation}
This allows us to introduce, as in
%(\cite{DGP_Collision_Transient} eq. (2.12))
\cite[eq. (2.12)]{DGP_Collision_Transient}
and as in \cite{ESZ2},
the height $H_i$ of the excursion $[e_i,e_{i+1}]$ of the potential, which is
\begin{equation*}
%\label{eq_def_Hi}
    H_i
:=
    \max_{e_i\leq k\leq e_{i+1}}[V(k)-V(e_i)],
\qquad
    i\geq 0.
\end{equation*}
We now define by induction, as in
%(\cite{DGP_Collision_Transient} after eq. (3.2)),
\cite[after eq. (3.2)]{DGP_Collision_Transient},
\begin{equation}\label{eq_def_sigma_i}
    \sigma(0)
:=
    -1,
\qquad
    \sigma(i)
:=
    \inf\{k > \sigma(i-1)\ :\ H_k\geq f_i\},
\qquad
    i\geq 1.
\end{equation}
and we recall that $\p$-almost surely, $\sigma(i)<\infty$ for all $i\in\N$.

We consider, for every integer $i\geq 1$, the random variable $b_i:=e_{\sigma(i)}$,
depending only on $\omega$ and $i$, as in
%(\cite{DGP_Collision_Transient} after eq. (3.2)).
\cite[after eq. (3.2)]{DGP_Collision_Transient}.
By
%(\cite{DGP_Collision_Transient} Prop. 3.3, see also the definition of $\Omega_2^{(i)}$ before it),
\cite[Prop. 3.3, see also the definition of $\Omega_2^{(i)}$ before it] {DGP_Collision_Transient},
we have $\sum_{i\geq 1}\p\big[b_i> i e^{\kappa f_i}\big]<\infty$.
So by the Borel-Cantelli lemma,
$\p$- a.s.,
\begin{equation}\label{ineg_bi}
    b_i
\leq
    i e^{\kappa f_i}
\end{equation}
for large $i$.

We also consider, for $x\in\Z$, a single RWRE $(S_n)_{n\in\N}$ starting from $x$ in the environment $\o$,
and denote its quenched law  by $P_\omega^x$.
That is, $(S_n)_{n\in\N}$ satisfies \eqref{probatransition} with
$\po^{(x_1,\dots, x_d, z_1)}$, $S^{(j)}$ and $x_j$
replaced respectively by $P_\omega^x$, $S$ and $x$.
We denote by $E_\omega^x$ the expectation under $P_\omega^x$.
We also introduce
$$
    \tau(u)
:=
    \min\{k\in\N, \ S_k=u\}, \qquad u\in\Z.
$$
We recall the following results from \cite{DGP_Collision_Transient}.

%We also recall the existence of a set of environments $\widetilde\Omega$ and of a
%(random) sequence $(i(n))_{n\in\N}$; their precise definition is not needed here,
%we just need some of their properties, which we gather in the following lemma.

\begin{lemma}\label{Lem_Omega_Tilde}
There exists a set of environments $\widetilde\Omega$,
which has probability $\p\big(\widetilde\Omega\big)=1$.
Also, there exists a (random) strictly increasing sequence $(i(n))_{n\in\N}$ of integers, defined on $\widetilde\Omega$ and depending
only on the environment $\omega$,
%such that $(i(n))_{n\in\N}$ is strictly increasing,
with $i(0)\geq 1$.
%   and that $i(n)\in\N^*:=\N-\{0\}$ for every $n\in\N$.
Moreover, there exists $C_8>0$ such that, for every $\omega\in\widetilde\Omega$,
\begin{equation}
\label{MINO1}
%{eq_Lemma_13_C8}
    \liminf_{n\rightarrow +\infty}\inf_{k\in [N_{i(n)}/2,2N_{i(n)}[\cap (2\mathbb Z)}
\ \
    P_\omega^{b_{i(n)}}\big[S_k=b_{i(n)}\big]
\ge
    C_8\, .
\end{equation}
Finally
for every $x\in\Z$, for $\p$-almost every environment $\omega$,
\begin{equation}
\label{eq_Cor_42}
    P_\omega^x[\tau(b_{i(n)})\le N_{i(n)}/10]
\to_{n\to+\infty
    }1.
\end{equation}
\end{lemma}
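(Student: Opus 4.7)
The plan is to aggregate a countable family of ``good environment'' events coming from the quenched analysis of transient subballistic RWRE developed in \cite{ESZ2} and \cite{DGP_Collision_Transient}, and then to extract along a $\p$-almost sure subsequence $(i(n))_n$ all the quenched estimates \eqref{MINO1} and \eqref{eq_Cor_42} simultaneously. The picture to keep in mind is that $b_i = e_{\sigma(i)}$ is at the bottom of a deep valley of the potential $V$ whose height $H_{\sigma(i)}$ is, by construction, at least $f_i$; for indices $i$ where this height is only polynomially larger than $f_i$, the walk started at $b_i$ both spends a time of order $N_i$ trapped in the valley and has enough room to equilibrate there.

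First, I would list several events on $\omega$, each of $\p$-probability $1-o(1)$, controlling (i) $b_i$ via \eqref{ineg_bi}, (ii) the depth $H_{\sigma(i)}$ lying in a window of the form $[f_i,\, f_i + C \log i]$ so that $e^{H_{\sigma(i)}}$ is comparable to $N_i$ up to polynomial factors in $i$, (iii) the heights $H_j$ of the preceding excursions $\sigma(i-1) < j < \sigma(i)$ being $< f_i$ (which is automatic from the definition of $\sigma$), and (iv) the total width of those preceding excursions being at most a small fraction of $N_i$. Taking the intersection over infinitely many $i$ of each of these events yields a full-probability set $\widetilde\Omega$. On $\widetilde\Omega$ I would extract $(i(n))_n$ recursively as the smallest index above $i(n-1)$ at which all good events of ``level'' $n$ are simultaneously satisfied, producing a strictly increasing sequence of integers depending only on $\omega$.

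To prove \eqref{MINO1}, I would restrict $S$ to the interval $[e_{\sigma(i(n)) - 1},\, e_{\sigma(i(n)) + 1}]$: on the good environment, the walk starting from $b_{i(n)}$ has not escaped this interval by time $2 N_{i(n)}$ with quenched probability close to $1$, and the reversible measure with density proportional to $e^{-V}$ gives $b_{i(n)}$ a mass bounded below by a positive constant. Standard return estimates for reversible nearest-neighbor Markov chains then yield a lower bound $C_8$ on $P_\omega^{b_{i(n)}}[S_k = b_{i(n)}]$ uniformly for every even $k \in [N_{i(n)}/2,\, 2 N_{i(n)}[$; the parity restriction is essential as the walk is $2$-periodic. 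For \eqref{eq_Cor_42}, I would decompose $\tau(b_{i(n)})$ under $P_\omega^x$ as a sum of crossing times of the preceding excursions of the potential: the good environment bounds each such crossing time in mean by $e^{f_{i(n)}}$ up to polynomial factors, so Markov's inequality combined with a crude polynomial bound on $\sigma(i(n))$ gives an expected total hitting time of order $N_{i(n)}/i(n)^{\varepsilon}$, hence $o(N_{i(n)})$ as $n \to +\infty$.

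The main obstacle is the uniformity in $k$ in \eqref{MINO1}: one has to guarantee simultaneously that the walk has not yet exited the deep trap at $b_{i(n)}$ by time $2 N_{i(n)}$ (requiring a lower bound on $H_{\sigma(i(n))}$) and that it has had sufficient time to equilibrate inside the trap (requiring an upper bound on $H_{\sigma(i(n))}$, as otherwise the walk is stuck away from $b_{i(n)}$). This forces a careful tuning of the scales $N_i$ and $f_i$, which is exactly the role played by the parameter $\varepsilon \in (0, (1-\kappa)/(2\kappa))$ fixed at the start of \cite[Section 3]{DGP_Collision_Transient}.
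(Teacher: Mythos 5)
You should first be aware that the paper does not reprove this lemma at all: its ``proof'' consists of citing \cite[Proposition 3.5]{DGP_Collision_Transient} for the construction of $\widetilde\Omega$ and $(i(n))_{n\in\N}$, \cite[Lemma 4.7]{DGP_Collision_Transient} for \eqref{MINO1}, and \cite[Corollary 4.2]{DGP_Collision_Transient} for \eqref{eq_Cor_42}. Your proposal instead sketches the underlying quenched argument from scratch, and its broad architecture (good-environment events, a deep valley at $b_i$ of height close to $f_i$, confinement plus equilibration with respect to the reversible measure $e^{-V}$ for \eqref{MINO1}, and a crossing-time decomposition for \eqref{eq_Cor_42}) does match what is actually done in \cite{DGP_Collision_Transient} and \cite{ESZ2}. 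Two points, however, are genuinely problematic as written. First, ``taking the intersection over infinitely many $i$ of each of these events yields a full-probability set'' is not a valid deduction: the events controlling the window $H_{\sigma(i)}\in[f_i,f_i+C\log i]$ and the geometry of the valley do not all hold for every large $i$, and the whole reason the subsequence $(i(n))_{n\in\N}$ exists is a second Borel--Cantelli / independence argument showing that the ``level-$i$ good event'' occurs for infinitely many $i$ almost surely. Your recursive extraction of $i(n)$ presupposes exactly this, so it needs to be stated and justified rather than folded into an intersection of probability-one events.

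Second, and more seriously, your estimate for \eqref{eq_Cor_42} is quantitatively wrong. You bound the expected hitting time by (number of excursions) $\times$ (per-excursion mean $\approx e^{f_{i(n)}}$) with ``a crude polynomial bound on $\sigma(i(n))$''. But $\sigma(i)$ is of order $e^{\k f_i}\approx (N_i/C_0)^{\k}i^{-\k(1+\e)}$, which is super-polynomial in $i$ (it grows like $(i!)^{1+\e}$); multiplying it by $e^{f_i}= N_i/(C_0 i^{1+\e})$ gives something of order $N_i^{1+\k}$ up to polynomial corrections, which is far larger than $N_{i(n)}/10$, so Markov's inequality yields nothing. The correct argument must bound the \emph{sum} $\sum_{j<\sigma(i)}e^{H_j}$ directly as a truncated heavy-tailed sum, which concentrates at the scale of its largest admissible term $e^{f_i}=N_i/(C_0 i^{1+\e})=o(N_i)$; this is precisely the content of \cite[Lemma 4.9]{ESZ2} and its extension \cite[Fact 2.4]{DGP_Collision_Transient}, and it is where the constant $C_0$ in \eqref{eq_def_N_i} is tuned. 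Without this replacement your proof of \eqref{eq_Cor_42} does not close. The sketch of \eqref{MINO1} is essentially the right picture, though the lower bound on the mass of $b_{i(n)}$ under the reversible measure (the constant $C_2$, whence $C_8=1/(8C_2)$) is itself a nontrivial environment estimate that belongs among your good events.
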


In the present paper, we just need some properties satisfied by $\widetilde\Omega$ and $(i(n))_{n\in\N}$,
gathered in the previous lemma,
but we do not need their precise definitions.

\begin{proof}[Proof of Lemma \ref{Lem_Omega_Tilde}  ]
First, $\widetilde\Omega$ and $(i(n))_{n\in\N}$ are defined in \cite[Proposition  3.5]{DGP_Collision_Transient},
which gives some of their properties. Also notice that in the proof of
\cite[Proposition  3.5]{DGP_Collision_Transient} with its notation,
we have in fact $i(0)=\inf{\ I}\geq i_0\geq 1$.
Also, \eqref{MINO1} and \eqref{eq_Cor_42}
are proved respectively in \cite[Lem. 4.7]{DGP_Collision_Transient}
and \cite[Corollary 4.2]{DGP_Collision_Transient}.
\end{proof}
In the previous lemma, the constant $C_8$ only depends on the law $\p$ of the environment
%(and does not depend on $x$),
since in the proof of
%(\cite{DGP_Collision_Transient}, Lem. 4.7)
\cite[Lem. 4.7]{DGP_Collision_Transient}
it is equal to $1/(8C_2)$,
with $C_2>0$ depending only on the law $\p$ (see \cite{DGP_Collision_Transient}, Lem. 2.5
and its proof).

%We also introduce the set of environments $\widetilde\Omega$
%defined in
%(\cite{DGP_Collision_Transient}, Proposition  3.5),
%\cite[Proposition  3.5]{DGP_Collision_Transient},
%which has probability $\p\big(\widetilde\Omega\big)=1$.
%Also, we will use the (random) sequence $(i(n))_{n\in\N}$, defined on $\widetilde\Omega$ and depending
%only on the environment $\omega$,
%and we recall that $(i(n))_{n\in\N}$ is strictly increasing
%and that $i(n)\in\N^*:=\N-\{0\}$ for every $n\in\N$
%(see \cite{DGP_Collision_Transient}, Proposition  3.5, and notice that in its proof with its notation,
%we have in fact $i(0)=\inf{\ I}\geq i_0\geq 1$).

Using the definition \eqref{eq_def_N_i} of $N_i$,
we now define, as in
%(\cite{DGP_Collision_Transient}, before Lemma 4.8),
\cite[before Lemma 4.8]{DGP_Collision_Transient},
\begin{equation}\label{eq_def_N_prime}
    N'_{i(n)}
:=
    N_{i(n)}-\mathbf 1_{\{(N_{i(n)}-b_{i(n)})\in (1+2\mathbb Z)\}},
\end{equation}
for every $n\in\N$ and every $\omega\in\widetilde\Omega$,
so that $N'_{i(n)}$ and $b_{i(n)}$ have the same parity.

%We will use the following result of \cite{DGP_Collision_Transient}.

%\begin{lemma}[\cite{DGP_Collision_Transient}, Lem. 4.7]
%\label{MINO1}
%There exists $C_8>0$ such that, for every $\omega\in\widetilde\Omega$,
%\begin{equation*}
%    \liminf_{n\rightarrow +\infty}\inf_{k\in [N_{i(n)}/2,2N_{i(n)}[\cap (2\mathbb Z)}
%\ \
%    P_\omega^{b_{i(n)}}\big[S_k=b_{i(n)}\big]
%\ge
%    C_8\, .
%\end{equation*}
%\end{lemma}

Since $0<\k<1/2$, we can fix
$\delta\in(0,1/2-\k)$.
Also, notice that $\p$- a.s. for large $n$, first using \eqref{ineg_bi},
\begin{eqnarray}
    b_{i(n)}
& \leq &
    i(n) \exp[\k f_{i(n)}]
\nonumber\\
& = &
    i(n) \exp[\k(\log (N_{i(n)}/C_0)-(1+\varepsilon)\log [i(n)] )]
\nonumber\\
%& = &
%    i(n) (N_{i(n)}/C_0)^\k \exp[-\k(1+\varepsilon)\log (i(n))]
%\\
& = &
    (N_{i(n)}/C_0)^\k (i(n))^{1-\k(1+\varepsilon)}
\nonumber\\
& \leq &
    (N_{i(n)}/C_0)^{\k+\delta}
\leq
    (1/2)\sqrt{N_{i(n)}'}
\label{eq_finale}
\end{eqnarray}
because $N_i\geq (i!)^{(1+\e)/\k}$ for large $i$.
So, $\p$- a.s., $b_{i(n)}-z_1\leq \sqrt{N_{i(n)}'}$  for large $n$.
%Also $b_i=e_{\sigma(i))}$ by definition (see \cite{DGP_Collision_Transient}, after eq. ((3.2)).
Also,
$e_0=0$ and $e_i> e_{i-1}$ for $i\geq 1$ by \eqref{eq_def_ei},
%(\cite{DGP_Collision_Transient} eq. (2.10))
thus $e_i\geq i$ for every $i\in\N$ by induction.
Similarly, $\sigma(0)=-1$ and $\sigma(i)\geq \sigma(i-1)+1$ for $i\geq 1$ by \eqref{eq_def_sigma_i}
%by (\cite{DGP_Collision_Transient}, after eq. ((3.2)),
so $\sigma(i)\to_{i\to+\infty}+\infty$. Since $(i(n))_{n\in\N}$ is a strictly increasing sequence of integers, all this gives
$b_{i(n)}=e_{\sigma(i(n))}\to_{n\to+\infty}+\infty$.
As a consequence, we get, $\p$- a.s. for large $n$,
\begin{equation}\label{Ineg_b_i_n}
    0
\leq
    b_{i(n)}-z_1
\leq
    \sqrt{N_{i(n)}'}.
\end{equation}
We now introduce
\begin{eqnarray*}
    T_Z(u)
& := &
    \min\{k\in\N,\ Z_k=u\},\qquad u\in\Z,
\\
    T_Z(u,v)
& := &
    \min\{k\in\N,\ Z_{k+T_Z(u)}=v\},\qquad u\in\Z,\ v\in\Z,
\\
    U_n
& := &
    \min\{k\geq N_{i(n)},\ Z_k =b_{i(n)}\},
\qquad
    n\in\N.
%    \min\{k\geq N_{i(n)}/2,\ Z(k) =b_{i(n)}\},
%\\
%    \tau(u)
%& := &
%    \min\{k\in\N, \ S_k=u\}, \qquad u\in\Z.
\end{eqnarray*}
%For $z\in\Z$, we denote by $Q^z$ the law of a simple random walk $Z$ starting from $z$.
We have, $\p$- a.s. for large $n$,
%thanks to \eqref{Ineg_b_i_n},
\begin{eqnarray}
&&
%    \po^{(x_1,\dots, x_d, z_1)}\big(U_n\leq N_{i(n)}\big)
%    Q^{z_1}
    \po^{(x_1,\dots, x_d, z_1)}
    (U_n\leq 2N_{i(n)})
\nonumber\\
& = &
%    Q^{z_1}
    \po^{(x_1,\dots, x_d, z_1)}
    (\exists
    %N_{i(n)}/2
    N_{i(n)}
    \leq k \leq 2N_{i(n)}, \ Z_k =b_{i(n)})
\nonumber\\
& = &
%    Q^{0}
    \po^{(x_1,\dots, x_d, 0)}
    (\exists N_{i(n)}
    %/2
    \leq k \leq 2N_{i(n)}, \ Z_k =b_{i(n)}-z_1)
\nonumber\\
%& = &
%%    Q^{0}
%    \po^{(x_1,\dots, x_d, 0)}
%    \Big(\exists N_{i(n)}
%    %/2
%    \leq k \leq 2N_{i(n)}, \
%    0 \leq
%    Z_k
%    =
%    b_{i(n)}-z_1\
%    \leq \sqrt{N_{i(n)}'}\Big)
%\nonumber\\
& \geq &
    %Q^{z_1}
    \po^{(x_1,\dots, x_d, 0)}
    \Big(
        -\sqrt{N_{i(n)}}
        \leq Z_{4\lfloor N_{i(n)}/3\rfloor}
        \leq 0
        \leq b_{i(n)}-z_1
        \leq \sqrt{N_{i(n)}'}
        \leq Z_{2N_{i(n)}}
    \Big)
\nonumber\\
& \geq &
    %Q^{z_1}
    \po^{(x_1,\dots, x_d, 0)}
    \Big(-\sqrt{N_{i(n)}}\leq Z_{4\lfloor N_{i(n)}/3\rfloor}\leq 0,
    \ Z_{2N_{i(n)}}\geq \sqrt{N_{i(n)}}
    \Big)
\nonumber\\
& \geq &
    C_1
\label{Ineg_Proba_Q_U_n}
\end{eqnarray}
for some constant $C_1>0$.
Indeed, we used, in the first inequality, the fact that $(Z_p)_{p\in\N}$ is a nearest neighbour random walk, so
$(Z_p)_{p\in[4\lfloor N_{i(n)}/3\rfloor, 2N_{i(n)}]\cap\N}$
and a fortiori
$(Z_p)_{p\in[N_{i(n)}, 2N_{i(n)}]}\cap\N$
take
%all
%it takes
all the values between $Z_{4\lfloor N_{i(n)}/3\rfloor}$ and $Z_{2N_{i(n)}}$
%in
%$
%    [4\lfloor N_{i(n)}/3\rfloor, 2N_{i(n)}]
%\subset
%    [N_{i(n)}, 2N_{i(n)}]
%$
and in particular $b_{i(n)}-z_1$ when
$
Z_{4\lfloor N_{i(n)}/3\rfloor}
%        \leq 0
\leq
    b_{i(n)}-z_1
%        \leq \sqrt{N_{i(n)}'}
\leq
    Z_{2N_{i(n)}}
$.
Also, we used \eqref{Ineg_b_i_n} and \eqref{eq_def_N_prime}
so that $N_{i(n)}\geq N_{i(n)}'$
to prove the second inequality,
and
the last inequality follows
e.g. by
%the central limit theorem and the strong Markov property or
Donsker's theorem.
%in the last one.

Now, extending
%(\cite{DGP_Collision_Transient}, Lem. 4.8),
\cite[Lem. 4.8]{DGP_Collision_Transient},
we have the following result.

\begin{lemma}
%[My lemma]
\label{Lemma_Proba_etre_en_b_Elargi}
%For every $\omega\in\widetilde\Omega$,
Let $x\in(2\Z)$.
For $\p$-almost every environment $\omega$,
$$
    \liminf_{n\rightarrow +\infty}
    \min_{k\in[3N_{i(n)}/4,\, 2N_{i(n)}]\cap(2\Z+b_{i(n)})}
    P_\omega^x\big[S_k=b_{i(n)}\big]
\ge
    \frac{C_8}2.
$$
\end{lemma}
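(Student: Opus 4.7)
My plan is to reduce the statement to the two inputs already supplied by Lemma~\ref{Lem_Omega_Tilde}: the return-probability estimate \eqref{MINO1} for walks started at $b_{i(n)}$, and the hitting-time estimate \eqref{eq_Cor_42} saying that a walk from $x$ reaches $b_{i(n)}$ quickly with high quenched probability. First I would apply the strong Markov property at $\tau(b_{i(n)})$ to get, for $k \in [3N_{i(n)}/4, 2N_{i(n)}] \cap (2\Z + b_{i(n)})$,
\begin{equation*}
    P_\omega^x\big[S_k = b_{i(n)}\big]
\ \geq\
    \sum_{j=1}^{\lfloor N_{i(n)}/10\rfloor}
    P_\omega^x\big[\tau(b_{i(n)}) = j\big]\, P_\omega^{b_{i(n)}}\big[S_{k-j} = b_{i(n)}\big].
\end{equation*}
Starting the sum at $j=1$ is harmless for $\p$-almost every $\omega$ and $n$ large, since $b_{i(n)} \to +\infty$ (as observed just before \eqref{Ineg_b_i_n}) while $x$ stays fixed, so eventually $x \neq b_{i(n)}$ and $\tau(b_{i(n)}) \geq 1$.

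The step needing the most care is checking that each surviving term in the sum falls in the scope of \eqref{MINO1}. For the range, $k \leq 2N_{i(n)}$ and $j \geq 1$ give $k - j \leq 2N_{i(n)} - 1 < 2N_{i(n)}$, while $k \geq 3N_{i(n)}/4$ and $j \leq N_{i(n)}/10$ give $k-j \geq 13N_{i(n)}/20 > N_{i(n)}/2$; hence $k - j \in [N_{i(n)}/2, 2N_{i(n)}[$ for $n$ large. For the parity, $x \in 2\Z$ forces a nearest-neighbour walk from $x$ to reach $b_{i(n)}$ only at times $j \equiv b_{i(n)} \pmod 2$; combined with $k \in 2\Z + b_{i(n)}$ this forces $k-j \in 2\Z$, exactly the parity required by \eqref{MINO1}.

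Finally, fixing $\eta \in (0,1)$, \eqref{MINO1} applied uniformly to the admissible $j$ yields $P_\omega^{b_{i(n)}}\big[S_{k-j} = b_{i(n)}\big] \geq C_8 - \eta$ for $n$ large (depending on $\omega \in \widetilde\Omega$). Summing in $j$ then gives
\begin{equation*}
    P_\omega^x\big[S_k = b_{i(n)}\big]
\ \geq\
    (C_8 - \eta)\, P_\omega^x\big[1 \leq \tau(b_{i(n)}) \leq N_{i(n)}/10\big],
\end{equation*}
whose right-hand side tends to $C_8 - \eta$ as $n \to +\infty$ by \eqref{eq_Cor_42} together with the observation that $\tau(b_{i(n)}) \geq 1$ eventually. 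Choosing $\eta$ so small that $(C_8 - \eta)(1 - \eta) \geq C_8/2$ produces the desired uniform lower bound $C_8/2$, which upon taking the minimum in $k$ and the $\liminf$ in $n$ gives the conclusion. No new environment-level input is required; the only real obstacle is the parity and range bookkeeping for the Markov decomposition.
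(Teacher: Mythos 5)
Your proposal is correct and follows essentially the same route as the paper: a strong Markov decomposition at $\tau(b_{i(n)})$ restricted to the event $\{\tau(b_{i(n)})\le N_{i(n)}/10\}$, with the range/parity check placing $k-\tau(b_{i(n)})$ in $[N_{i(n)}/2,2N_{i(n)}[\cap(2\Z)$ so that \eqref{MINO1} applies, and then \eqref{eq_Cor_42} to send the hitting probability to $1$. The only differences are cosmetic (a sum over $j$ instead of a conditional expectation, and explicit $\eta$-bookkeeping where the paper absorbs the slack directly into the factor $C_8/2$).
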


\begin{proof}[Proof of Lemma \ref{Lemma_Proba_etre_en_b_Elargi}]
Define $\mathcal F_m:=\sigma(S_0, S_1, \dots, S_m, \omega)$ for $m\in\N$.
Let $x\in(2\Z)$ and $\omega\in\widetilde\Omega$.
In what follows,
for every $k\in\N$,
$P_\omega^{b_{i(n)}}\left[S_\ell=b_{i(n)}\right]_{|\ell=k}$
denotes
$P_\omega^{b_{i(n)}}\left[S_k=b_{i(n)}\right]$.
Due to the strong Markov property, for large $n$,
for every $k\in[3N_{i(n)}/4, 2N_{i(n)}]\cap(2\Z+b_{i(n)})$,
\begin{eqnarray}
    P_\omega^x\big[S_k=b_{i(n)}\big]
&=&
    E_\omega^x\big[ P_\omega^x\big(S_k=b_{i(n)}\,\big|\,\mathcal F_{\tau(b_{i(n)})}\big)\big]
\nonumber\\
&\ge &
    E_\omega^x\left[ \mathbf 1_{\{\tau(b_{i(n)})\le N_{i(n)}/10\}}P_\omega^{b_{i(n)}}\left[S_\ell=b_{i(n)}\right]_{|\ell=k-\tau(b_{i(n)})}\right]
\nonumber\\
&\ge&
    \frac{C_8}{2}\, P_\omega^x\left[\tau(b_{i(n)})\le \frac {N_{i(n)}}{10}\right]\,
\label{Ineg_Proba_Egal_bin}
\end{eqnarray}
    due to \eqref{MINO1},
since $\big(k-\tau(b_{i(n)})\big)\in[N_{i(n)}/2,2N_{i(n)}]\cap(2\Z)$
%uniformly for large $n$
on the event $\{\tau(b_{i(n)})\le N_{i(n)}/10\}$.
Finally,
%by
%(\cite{DGP_Collision_Transient}, Corollary 4.2),
%\cite[Corollary 4.2]{DGP_Collision_Transient},
%for $\p$-almost every environment $\omega$,
%$P_\omega^x[\tau(b_{i(n)})\le N_{i(n)}/10]\to_{n\to+\infty}1$.
\eqref{eq_Cor_42} combined with \eqref{Ineg_Proba_Egal_bin} prove the lemma.
\end{proof}

As a consequence, $\p$- a.s. for large $n$, defining $\mathcal{G}_m:=\sigma(Z_0, Z_1,\dots, Z_m,\o)$ for $m\geq 0$,
\begin{eqnarray}
&&
    \po^{(x_1,\dots, x_d, z_1)}\big(\exists k\in[N_{i(n)}, 2N_{i(n)}],
    \ S_k^{(1)}=S_k^{(2)}=\dots=S_k^{(d)}=Z_k\big)
%    \po^{(x_1,\dots, x_d, z_1)}\big(\exists k\in[N_{i(n)}/2, 2N_{i(n)}],\ S_k^{(1)}=\dots=S_k^{(d)}=Z_k\big)
\nonumber\\
& \geq &
    \po^{(x_1,\dots, x_d, z_1)}\big(S_{U_n}^{(1)}=S_{U_n}^{(2)}=\dots=S_{U_n}^{(d)}=Z_{U_n}, \ U_n\leq 2N_{i(n)}\big)
\nonumber\\
& = &
    \eo^{(x_1,\dots, x_d, z_1)}\big[  \un_{\{U_n\leq 2N_{i(n)}\}}
        \po^{(x_1,\dots, x_d, z_1)}\big(S_{U_n}^{(1)}=S_{U_n}^{(2)}=\dots=S_{U_n}^{(d)}=b_{i(n)}\mid \mathcal{G}_{U_n}\big)
    \big]
\nonumber\\
& = &
    \eo^{(x_1,\dots, x_d, z_1)}\big[
        \un_{\{U_n\leq 2N_{i(n)}\}}
        \po^{(x_1,\dots, x_d, z_1)}\big(S_k^{(1)}=S_k^{(2)}=\dots=S_k^{(d)}=b_{i(n)}\big)_{| k=U_n}
    \big]
\nonumber\\
& \geq &
    \eo^{(x_1,\dots, x_d, z_1)}\bigg[
        \un_{\{U_n\leq 2N_{i(n)}\}}
        \min_{k\in[N_{i(n)}, 2N_{i(n)}]\cap(2\Z+b_{i(n)})}
        \prod_{j=1}^d
        \po^{x_j}\big(S_k=b_{i(n)}\big)
    \bigg]
\nonumber\\
& \geq &
    (C_8/4)^d
%    Q^{z_1}
    \po^{(x_1,\dots, x_d, z_1)}(U_n\leq 2N_{i(n)})
\geq
    (C_8/4)^d
    C_1=:C_3>0,
\label{Ineg_def_C2}
\end{eqnarray}
where we used the definition of $U_n$ in the first inequality and in the first equality,
the independence of the $S^{(j)}$, $1\leq j\leq d$ with $Z$ and then with $U_n$ and $\mathcal{G}_{U_n}$
under $\po^{(x_1,\dots, x_d, z_1)}$
in the second equality,
the independence of $S^{(1)},\dots, S^{(d)}$, the definition of $U_n$ and $z_1\in(2\Z)$
so that $U_n\in(2\Z+b_{i(n)})$ under $\po^{(x_1,\dots, x_d, z_1)}$
in the second inequality,
and finally Lemma \ref{Lemma_Proba_etre_en_b_Elargi} and $x_j\in(2\Z)$, $1\leq j \leq d$,
followed by \eqref{Ineg_Proba_Q_U_n} in the last line.

For every $k\in\N$, we introduce the events:
$$
    A_k
:=
    \big\{S_k^{(1)}=S_k^{(2)}=\ldots = S_k^{(d)}=Z_k\big\},
\qquad
%\mbox{and}\quad
    E
:=
%    \big\{A_k\ i.o.\big\}
    \limsup\nolimits_n A_n
=
    \cap_{N\ge 0}\cup_{k\ge N}A_k.
$$
We also define, for $n\in\N$, the events
$$
    B_n
:=
    \cup_{k\in[N_{i(n)}, 2N_{i(n)}]} A_k,
%    \cup_{k\in[N_{i(n)}/2, 2N_{i(n)}]} A_k,
\quad
    B_n'
:=
    \cup_{k\in[N_{i(n)}+1, 2N_{i(n)}+1]} A_k,
%    \cup_{k\in[N_{i(n)}/2+1, 2N_{i(n)}+1]} A_k,
\quad
    B_n''
:=
    \cup_{k\in[N_{i(n)}, 2N_{i(n)}+1]} A_k.
%    \cup_{k\in[N_{i(n)}/2, 2N_{i(n)}+1]} A_k.
$$
We now consider the case $(x_1,\dots,x_d,z_1)\in(2\Z+1)^{d+1}$.
Using the Markov property at time $1$ then applying \eqref{Ineg_def_C2},
we have  $\p$- a.s. for large $n$,
since $\big(S_1^{(1)},\dots, S_1^{(d)}, Z_1\big)$ belongs to a finite subset of $(2\Z)^{d+1}$
under $\po^{(x_1,\dots, x_d, z_1)}$,
\begin{equation*}
    \po^{(x_1,\dots, x_d, z_1)}(B_n')
=
    \eo^{(x_1,\dots, x_d, z_1)}\big[
        \po^{(S_1^{(1)},\dots, S_1^{(d)}, Z_1)}(B_n)
    \big]
\geq
    C_3.
\end{equation*}
This and \eqref{Ineg_def_C2} give, $\p$- a.s. for every
$(x_1,\dots,x_d,z_1)\in\mathcal{D}:=[(2\Z)^{d+1}\cup (2\Z+1)^{d+1}]$,
since $B_n\subset B_n''$ and $B_n'\subset B_n''$,
\begin{equation}\label{Ineg_def_C2_Impair}
    \liminf_{n\rightarrow +\infty}
    \po^{(x_1,x_2,\dots,x_d,z_1)}\big(B_n''\big)
\geq
    C_3>0.
\end{equation}
For $\p$-almost every $\omega$,
for every $(x_1,\dots,x_d, z_1)\in\mathcal{D}$,
 we have since $N_{i(n)}\to_{n\to+\infty}+\infty$,
\begin{eqnarray}
    \po^{(x_1,x_2,\dots,x_d,z_1)}\big(E\big)
&\ge&
    \po^{(x_1,x_2,\dots,x_d,z_1)}\left[\bigcap_{N\ge 0}\bigcup_{n\ge N} B_n''  \right]\nonumber\\
&=&
    \lim_{N\rightarrow +\infty} \po^{(x_1,x_2,\dots,x_d,z_1)}\left[\bigcup_{n\ge N}B_n''\right]\nonumber\\
&\ge&
    \liminf_{N\rightarrow +\infty}
    \po^{(x_1,x_2,\dots,x_d,z_1)}\big(B_N''\big)
\ge
    C_3>0
\label{MINO_2}
\end{eqnarray}
by \eqref{Ineg_def_C2_Impair}.

We conclude as in \cite{DGP_Collision_Transient}.
Observe that, for fixed $\omega$, $\big(S_n^{(1)},\dots,S_n^{(d)}, Z_n\big)_n$ is a Markov chain
and that $\mathbf 1_E$ is measurable, bounded
and stationary (i.e. shift-invariant).
Therefore,
applying a result of Doob (see e.g. \cite[Proposition V-2.4]{Neveu}),
\begin{equation}\label{Doob_2}
    \forall (x_1,\dots,x_d, z_1)\in\mathbb Z^{d+1},
\quad
   \lim_{n\rightarrow +\infty}P_\omega^{(S_n^{(1)},\dots,S_n^{(d)}, Z_n)}(E)
=
    \mathbf 1_E
\quad\
    P^{(x_1,\dots,x_d,z_1)}_\omega-\mbox{almost surely}.
\end{equation}
Moreover, for every $(x_1,\dots,x_d,z_1)\in\mathcal D$, $P_\omega^{(x_1,\dots,x_d,z_1)}$-almost surely,
for every $n\ge 0$,
we have $\big(S_n^{(1)},\dots,S_n^{(d)},Z_n\big)\in \mathcal D$.
So applying \eqref{MINO_2} to $\big(S_n^{(1)},\dots,S_n^{(d)},Z_n\big)\in \mathcal D$,
we obtain that, for $\p$-almost every $\omega$, (for which \eqref{MINO_2} holds),
for every $(x_1,\dots,x_d,z_1)\in\mathcal D$,
$P_\omega^{(x_1,\dots,x_d,z_1)}$-almost surely for every $n\geq 0$,
 $P_\omega^{(S_n^{(1)},\dots,S_n^{(d)}, Z_n)}(E)\geq C_3$.

Taking the limit as $n\to+\infty$ in the previous inequality and applying
%Combining this with
\eqref{Doob_2}, we conclude that for $\p$-almost every environment $\omega$,
for every $(x_1,\dots,x_d,z_1)\in\mathcal D$,
$P_\omega^{(x_1,\dots,x_d,z_1)}$-almost surely,
$\mathbf 1_E\geq C_3>0$
and so $\mathbf 1_E=1$.
As a consequence,
for every $(x_1,\dots,x_d,z_1)\in\mathcal D$,
for $\p$-almost every $\omega$,
$P_\omega^{(x_1,\dots,x_d,z_1)}(E)=1$.
This concludes the proof of Theorem \ref{Theorem1Meeting_2}.
\hfill$\Box$

\subsection{Proof of Proposition \ref{Prop_kappa_grand}}
We suppose that \eqref{UE}, \eqref{Condition_Neg} and \eqref{ConditionA} hold. Let $d\geq 1$
and $(x_1,\dots,x_d, z_1)\in \mathbb Z^{d+1}$.
%with the same parity,

We first assume that $\k>1$.
In this case, $\E(\rho_0)<1$.
So, by Solomon \cite{S2}, there exists $v>0$ such that $S_n^{(1)}/n\to_{n\to +\infty} v$,
$\P^{(x_1,\dots,x_d,z_1)}$-almost surely.
Thus by the law of iterated logarithm for simple random walks (see e.g. R{\'e}v{\'e}sz \cite{Revesz} p. 31),
we have
$\P^{(x_1,\dots,x_d,z_1)}$-almost surely for large $n$,
$$
    S_n^{(1)} \geq (v/2) n > 2\sqrt{2 n \log\log n}> Z_n.
$$
Hence $\P^{(x_1,\dots,x_d,z_1)}$-almost surely for large $n$, $S_n^{(1)}\neq Z_n$, which proves
Proposition \ref{Prop_kappa_grand} when $\k>1$.

We now assume that $1/2<\k\leq 1$. To the best of our knowledge,
precise almost sure asymptotics of $S^{(1)}$, in terms of $\limsup$ or $\liminf$,  are not yet available
(as was noticed e.g. by Fribergh et al. \cite{FGP10} p. 45)
% at least in the non arithmetic case),
contrarily to
diffusions in a drifted Brownian potential (see Devulder \cite{Devulder_Max_Loc}, Corollary 1.10),
which are generally considered as the continuous time and space analogue of RWRE.

However, choosing $0<\e<\k-1/2$ and applying Fribergh et al.
%(\cite{FGP10} eq. (1.7)),
\cite[eq. (1.7)]{FGP10},
for $\p$-almost every $\o$, we have
$\sum_{n=1}^\infty \po^x\big(S_n^{(1)}<n^{1/2+\e/2}\big)<\infty$
for $x=0$,
and so
$\sum_{n=1}^\infty \po^x\big(S_n^{(1)}<n^{1/2+\e}\big)<\infty$
for every $x\in\Z$.
So for $\p$-almost every $\o$,
we have $\po^{(x_1,\dots,x_d,z_1)}$-almost surely for large $n$,
$$
    S_n^{(1)}
\geq
    n^{1/2+\e}
>
    2\sqrt{2 n \log\log n}
>
    Z_n,
$$
once more by the law of iterated logarithm for the simple random walk $(Z_k)_k$.
Hence $\P^{(x_1,\dots,x_d,z_1)}$-almost surely for large $n$, $S_n^{(1)}\neq Z_n$, which proves
Proposition \ref{Prop_kappa_grand}
%This proves Proposition \ref{Prop_kappa_grand}
when $1/2<\k\leq 1$.
\hfill$\Box$

\section{Extensions and open questions}\label{Sect_Extensions_Questions}
We proved in Theorem \ref{Theorem1Meeting_2} and Proposition \ref{Prop_kappa_grand}
that under their hypotheses, there are
almost surely infinitely many
times $n$ for which
$
    S_n^{(1)}= S_n^{(2)}=\ldots=S_n^{(d)}=Z_n
$
when $0<\k<1/2$,
but only a finite number of such collisions when $\k>1/2$.
A natural question is: what happens when $\k=1/2$?
This might require more precise estimates than those of \cite{DGP_Collision_Transient}
(and e.g. \eqref{ineg_bi}).

Also, in the ballistic case (i.e. when $\k>1$), what can we say about the collisions between
independent random walks in the same environment and some simple random walks with the same speed
as the RWRE? (This question was in fact suggested by one referee for future work).
To this aim, it could be interesting to use a quenched local limit theorem (LLT) for ballistic
RWRE. Such a theorem has been proved by Dolgopyat et al. \cite{Dolgo_gold_13}
 when $\k>2$, for a slightly different model of RWRE (for which the probability
not to move is strictly positive at some locations, so that the RWRE is aperiodic).
However a quenched LLT is not yet available when $1<\k\leq 2$ to the best of my knowledge
(see e.g. \cite[Section 2]{Dolgo_gold_19} and the introduction of \cite{Devulder_LLT_ArXiv} for
recent reviews of LLT for RWRE).

We can also wonder what happens when there are several simple random walks,
as in \cite{Devulder_Gantert_Pene} with transient RWRE instead of recurrent ones.
That is, let $d\geq 1$, $p\geq 2$, $(x_1,\dots,x_d, z_1, \dots, z_p)\in \mathbb Z^{d+p}$,
and define $\omega$ as in Subsection \ref{Sub_Sec_12}.
Conditionally on $\o$, we consider $(d+p)$ independent Markov chains
$\big(S_n^{(j)}\big)_{n\in\N}$, $1\leq j \leq d$
and $\big(Z_n^{(i)}\big)_{n\in\N}$, $1\leq i \leq p$,
defined  by $S_0^{(j)}=x_j$ for $1\leq j \leq d$,
$Z^{(i)}_0=z_i$ for $1\leq i \leq p$,
and satisfying \eqref{probatransition}
and \eqref{probatransition2}
with $Z$ replaced by $Z^{(i)}$ for each $1\leq i \leq p$
and $\po^{(x_1,\dots, x_d, z_1)}$ replaced by
$\po^{(x_1,\dots, x_d, z_1,\dots, z_p)}$.
Then, we have the following result when there are $p\geq 3$ simple random walks.

\begin{proposition}
%[My proposition]
\label{Prop_kappa_grand_plusieurs_SW}
Assume \eqref{UE}, \eqref{Condition_Neg} and \eqref{ConditionA}, with $\k>0$.
Let $(x_1,\dots,x_d, z_1, \dots, z_p)\in \mathbb Z^{d+p}$,
%with the same parity,
where $d\geq 1$ and $p\geq 3$.
Then for almost every $\omega$,
the number of $n\in\N$ such that
$$
    S_n^{(1)}= S_n^{(2)}=\ldots=S_n^{(d)}=Z_n^{(1)}=\ldots=Z_n^{(p)}
$$
is $\po^{(x_1,\dots, x_d, z_1,\dots, z_p)}$-almost surely
finite.
\end{proposition}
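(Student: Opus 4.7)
The plan is to apply the first Borel--Cantelli lemma directly under the quenched law $\po^{(x_1,\dots, x_d, z_1,\dots, z_p)}$, exploiting the fact that three or more independent simple random walks supply enough factors of the Gaussian local-limit bound $O(1/\sqrt n)$ to make the simultaneous-meeting probability summable. Notice already that neither the transience of the RWRE, nor the value of $\kappa$, nor any fine property of $\omega$ is really needed: what matters is only the mutual independence of the $Z^{(i)}$ among themselves and from the $S^{(j)}$ under the quenched law, together with the pointwise heat-kernel bound for SRW.

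More precisely, I would bound the full collision event at time $n$ by the weaker event
\[
    F_n := \bigl\{ Z_n^{(1)} = Z_n^{(2)} = \cdots = Z_n^{(p)} = S_n^{(1)} \bigr\}.
\]
Under $\po^{(x_1,\dots, x_d, z_1,\dots, z_p)}$, the walks $Z^{(1)}, \dots, Z^{(p)}$ and $S^{(1)}$ are independent; each $Z^{(i)}$ is a standard simple random walk on $\Z$, for which the local central limit theorem (or Stirling applied to a binomial coefficient) provides a universal constant $C>0$ with
\[
    \sup_{x \in \Z}\ \sup_{i \in \{1,\dots,p\}}\ \po\bigl(Z_n^{(i)} = x\bigr) \leq \frac{C}{\sqrt n},
\qquad n \geq 1,
\]
regardless of the starting points $z_i$. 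Conditioning on $S_n^{(1)}$ and invoking the independence of $(Z^{(1)},\dots,Z^{(p)})$ from $S^{(1)}$ and among themselves,
\[
    \po^{(x_1,\dots, x_d, z_1,\dots, z_p)}(F_n)
    = \sum_{x \in \Z} \po\bigl(S_n^{(1)} = x\bigr) \prod_{i=1}^p \po\bigl(Z_n^{(i)} = x\bigr)
    \leq \frac{C^p}{n^{p/2}},
\]
which holds for every environment $\omega$.

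Since $p \geq 3$, the series $\sum_n n^{-p/2}$ converges, so the Borel--Cantelli lemma applied under $\po^{(x_1,\dots, x_d, z_1,\dots, z_p)}$ yields, for every $\omega$, that $F_n$ (and hence the full collision event) occurs only for finitely many $n$, $\po^{(x_1,\dots, x_d, z_1,\dots, z_p)}$-almost surely. There is no real obstacle in this argument: the only ingredient beyond independence is the universal SRW mass bound $O(1/\sqrt n)$. The contrast with Theorem \ref{Theorem1Meeting_2} is telling: with a single SRW $Z$ the analogous estimate is only of order $1/\sqrt n$, which does not sum; it is precisely the passage to $p\geq 3$ independent SRW that tips the Borel--Cantelli balance and forces finiteness irrespective of the environment.
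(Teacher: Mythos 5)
Your proposal is correct and follows essentially the same route as the paper: bound the full collision event by the event that $S_n^{(1)}$ meets the simple random walks, decompose over the common location using quenched independence, apply the uniform $O(1/\sqrt n)$ local bound for each simple random walk to get a summable $O(n^{-p/2})$ estimate valid for every environment, and conclude by Borel--Cantelli. The only (immaterial) difference is that the paper retains just three of the $Z^{(i)}$ to get $O(n^{-3/2})$, whereas you keep all $p$.
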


\begin{proof}[Proof of Proposition \ref{Prop_kappa_grand_plusieurs_SW}]
The proof is very similar to the one of \cite[Thm. 1.2, case (i)]{Devulder_Gantert_Pene}, however we give it
here for the sake of completeness.
It is in fact valid for any
%We fix an
environment $\o\in(0,1)^\Z$.
We fix such an environment $\o$.
We have for large $n$,
\begin{eqnarray}
\label{Proba_Collisions_RWRE_SRW}
&&
    \po^{(x_1,\dots, x_d, z_1,\dots, z_p)}
    \big[
        S_n^{(1)}= S_n^{(2)}=\ldots=S_n^{(d)}=Z_n^{(1)}=\ldots=Z_n^{(p)}
    \big]
\\
& \leq &
    \po^{(x_1,\dots, x_d, z_1,\dots, z_p)}
    \big[
        S_n^{(1)}=Z_n^{(1)}=Z_n^{(2)}=Z_n^{(3)}
    \big]
\nonumber\\
& = &
    \sum_{k\in\Z}
    \po^{(x_1,\dots, x_d, z_1,\dots, z_p)}
    \big[
        S_n^{(1)}=Z_n^{(1)}=Z_n^{(2)}=Z_n^{(3)}=k
    \big]
\nonumber\\
& = &
    \sum_{k\in\Z}
    \po^{x_1}\big(S_n=k\big)
    \prod_{i=1}^3
    \po^{(x_1,\dots, x_d, z_1,\dots, z_p)}
    \big[
        Z_n^{(i)}=k
    \big]
\leq
    \sum_{k\in\Z}
    \po^{x_1}\big(S_n=k\big)
    \frac{C}{n^{3/2}}
=
    \frac{C}{n^{3/2}},
\nonumber
\end{eqnarray}
since for every $k\in\Z$ and $n\in\N$,
$
    \po^{(x_1,\dots, x_d, z_1,\dots, z_p)}
    \big[
        Z_{2n}^{(i)}=k
    \big]
=
    \po^{(0,\dots,0)}
    \big[
        Z_{2n}^{(i)}=k-z_i
    \big]
\leq
    \po^{(0,\dots,0)}
    \big[
        Z_{2n}^{(i)}=0
    \big]
\sim_{n\to+\infty}(\pi n)^{-1/2}
$,
e.g. by the local limit theorem for simple random walks or by Stirling's formula.
Hence the sum of the right hand sides of \eqref{Proba_Collisions_RWRE_SRW} is
finite, and Proposition \ref{Prop_kappa_grand_plusieurs_SW}
follows from the Borel-Cantelli lemma.
\end{proof}
However, the proof of Theorem \ref{Theorem1Meeting_2} cannot be extended to the case $p= 2$,
since we can prove,  using e.g. the local limit theorem,
Uchiyama \cite[Corollary 1.2 and Remarks 4 and 5 with our eq. \eqref{eq_finale}, or Theorem 1.4 and Remark 3]{Uchiyama},
and
Jain and Pruitt \cite[Section 4]{Jain_Pruitt}
or again
\cite[Corollary 1.2 and Remark 4 with $x=0$]{Uchiyama},
that
$
%    \lim_{n\to+\infty}
     P_\omega^{(x_1,\dots, x_d,z_1,z_2)}
     \big(
     %\Xi_n
         \exists N_{i(n)}\leq k \leq 2N_{i(n)}, \ Z^{(1)}_k=Z^{(2)}_k=b_{i(n)}
     \big)
$
tends to $0$ a.s. as $n\to+\infty$.
Also, the proof of Proposition \ref{Prop_kappa_grand_plusieurs_SW} requires that $p\geq 3$.
This leads us to our last question:
is the number of collisions finite or infinite
when there are $d\geq 1$ (transient) RWRE and two (recurrent) simple random walks, i.e.
when $p=2$?
%do the $d$ random walks
%$\big(S_n^{(j)}\big)_{n\in\N}$, $1\leq j \leq d$
%in the environment $\o$ (under hypotheses \eqref{UE}, \eqref{Condition_Neg} and \eqref{ConditionA})
%and the two simple random walks

%%%%%%%%%%%%%%%%%%%%%%%%%%%%%%%%%%%%%%%%%%%%%%%%%%%%%%%%%%%%%%%%%%%
%%                                                               %%
%% Use the two commands below for producing your bibliography    %%
%% with bibtex, then comment again the commands and include the  %%
%% content of the .bbl file in this file below the commands.     %%
%%                                                               %%
%%%%%%%%%%%%%%%%%%%%%%%%%%%%%%%%%%%%%%%%%%%%%%%%%%%%%%%%%%%%%%%%%%%

%\bibliographystyle{amsplain}
%\bibliography{yourbibfilename}

% add below the content of your .bbl file produced by bibtex.

%%%%%%%%%%%%%%%%%%%%%%%%%%%%%%%%%%%%%%%%%%%%%%%%%%%%%%%%%%%%%%%%%%%
%%                                                               %%
%% You may add acknowledgments (optional).                       %%
%%                                                               %%
%%%%%%%%%%%%%%%%%%%%%%%%%%%%%%%%%%%%%%%%%%%%%%%%%%%%%%%%%%%%%%%%%%%
\begin{acks}
I am grateful to the two referees for their very careful reading of the paper
and for useful comments and questions.
\end{acks}

%%%%%%%%%%%%%%%%%%%%%%%%%%%%%%%%%%%%%%%%%%%%%%%%%%%%%%%%%%%%%%%%%%%
%%                                                               %%
%% You have reached the end of your document.                    %%
%%                                                               %%
%%%%%%%%%%%%%%%%%%%%%%%%%%%%%%%%%%%%%%%%%%%%%%%%%%%%%%%%%%%%%%%%%%%

\end{document}